\theoremstyle{plain}
\newtheorem{thm}[subsection]{Theorem}
\newtheorem{cor}[subsection]{Corollary}
\newtheorem{prop}[subsection]{Proposition}
\newtheorem{lemma}[subsection]{Lemma}
\newtheorem*{ABC}{ABC Theorem}
\theoremstyle{remark}
\newtheorem{rem}[subsection]{Remark}
\newtheorem{exs}[subsection]{Examples}
\DeclareSymbolFont{cyrletters}{OT2}{wncyr}{m}{n}
\DeclareMathSymbol{\sha}{\mathalpha}{cyrletters}{"58}
\newcommand{\CC}{\mathcal{C}}
\newcommand{\XX}{\mathcal{X}}
\newcommand{\YY}{\mathcal{Y}}
\newcommand{\F}{\mathbb{F}}
\newcommand{\Fq}{{\mathbb{F}_q}}
\newcommand{\Fqbar}{{\overline{\mathbb{F}}_q}}
\newcommand{\ratto}{{\dashrightarrow}}
\renewcommand{\P}{\mathbb{P}}
\def\clap#1{\hbox to 0pt{\hss#1\hss}}
\date{December 14, 2016}
\begin{document}

\title[Number of points]{On the number of rational points \\
on  special families of curves \\ over function fields}

\author{Douglas Ulmer}
\address{School of Mathematics, 
Georgia Institute of Technology, 
Atlanta, GA 30332, USA}
\email{douglas.ulmer@math.gatech.edu}

\author{Jos\'e Felipe Voloch}
\address{School of Mathematics and Statistics,
University of Canterbury, 
Private Bag 4800, Christchurch 8140, New Zealand}
\email{felipe.voloch@canterbury.ac.nz}

%\subjclass[2010]{11G30} 

\begin{abstract}
  We construct families of curves which provide counterexamples for a
  uniform boundedness question.  These families generalize those studied
  previously by several authors in \cite{Ulmer14a}, \cite{AIMgroup},
  and \cite{ConceicaoUlmerVoloch12}.  We show, in detail, what fails
  in the argument of Caporaso, Harris, Mazur that uniform boundedness
  follows from the Lang conjecture. We also give a direct proof that
  these curves have finitely many rational points and give explicit
  bounds for the heights and number of such points.
\end{abstract}

\maketitle

\section{Unboundedness of rational points}

The question of whether there is a uniform bound for the number of
rational points on curves of fixed genus greater than one over a fixed
number field has been considered by several authors.  In particular,
in \cite{CaporasoHarrisMazur97} Caporaso et al.\ showed that this
would follow from the Bombieri-Lang conjecture that the set of
rational points on a variety of general type over a number field is
not Zariski dense.  In \cite{ConceicaoUlmerVoloch12}, Concei\c c\~ao
and the present authors gave examples over function fields of families
of smooth curves of fixed genus whose number of rational points is
unbounded.  Our first point is that these examples are part of a more
general family.

Fix a prime $p$ and a power $q$ of $p$, let $\Fq$ be the field of $q$
elements, and let $\Fq(t)$ be the rational function field over $\Fq$.
Choose an integer $r>1$ and prime to $p$, and let $h(x)\in\Fq[x]$ be
a polynomial of positive degree which is not the $e$-th power of another
element of $\Fq(t)$ for any divisor $e>1$ of $r$.  We also assume that
$h(0)\neq0$.  For $a\in\Fq(t)\setminus\Fq$, let $X=X_{h,r,a}$ be the
smooth projective curve over $\Fq(t)$ associated to the equation
$$X:\quad y^r=h(x)h(a/x).$$
Our hypotheses imply that $X$ is absolutely irreducible and its genus
is independent of $a$.  In the case where $h$ has distinct roots and
degree $s$ with $(r,s)=1$, the Riemann-Hurwitz formula shows that $X$
has genus $g=(r-1)s$.

\begin{thm}
  Assume that $r$ divides $q^f+1$ for some $f\ge1$.  Then as $a$
  varies through $\Fq(t)\setminus\Fq$, the number of rational points
  of the curve $X_{h,r,a}$ over $\F_q(t)$ is unbounded.
\end{thm}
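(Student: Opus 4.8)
The plan is to produce, for suitable $a$, arbitrarily many $\Fq(t)$-rational points by exploiting the identity $h(u)^{q^m}=h(u^{q^m})$, which holds for every $u$ because $h$ has coefficients in $\Fq$ (these are fixed by the $q^m$-power map). Concretely, fix an integer $m\ge1$ with $r\mid q^m+1$ and let $b\in\Fq(t)^{\times}$. Setting $a=b^{q^m+1}$ forces $a/b=b^{q^m}$, whence
$$h(b)\,h(a/b)=h(b)\,h(b^{q^m})=h(b)^{1+q^m}=\bigl(h(b)^{(q^m+1)/r}\bigr)^{r},$$
an $r$-th power in $\Fq(t)$ since $r\mid q^m+1$. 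Thus $\bigl(b,\,h(b)^{(q^m+1)/r}\bigr)$ is an $\Fq(t)$-rational point of $X_{h,r,a}$. First I would record that $m$ may be taken to be any odd multiple of $f$, since $q^{(2k+1)f}\equiv(-1)^{2k+1}\equiv-1\pmod r$, giving infinitely many admissible exponents $m$.

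The main idea is then to choose a \emph{single} $a$ for which many different exponents $m$ contribute simultaneously. Given $n$, set $m_k=(2k+1)f$ for $0\le k<n$, let $L=\lcm\bigl(q^{m_0}+1,\dots,q^{m_{n-1}}+1\bigr)$, and take $a=t^{L}\in\Fq(t)\setminus\Fq$. For each $k$ the element $b_k:=t^{\,L/(q^{m_k}+1)}$ lies in $\Fq(t)$ and satisfies $b_k^{\,q^{m_k}+1}=t^{L}=a$, so by the previous paragraph $P_k:=\bigl(b_k,\,h(b_k)^{(q^{m_k}+1)/r}\bigr)$ is an $\Fq(t)$-rational point of $X_{h,r,a}$. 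Since the exponents $L/(q^{m_k}+1)$ are pairwise distinct, the $x$-coordinates $b_k$ are pairwise distinct, so the $P_k$ are $n$ distinct points. Letting $n\to\infty$ (with $a=t^{L}$ depending on $n$, and always non-constant) shows that the number of rational points is unbounded as $a$ varies.

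The remaining verifications are routine: distinct exponents give genuinely distinct $b_k\in\Fq(t)$, and distinct $x$-coordinates give distinct points on the smooth projective model via the degree-$r$ map $x\colon X_{h,r,a}\to\P^{1}$; moreover each $P_k$ is an honest point rather than a base point, since $b_k=t^{L/(q^{m_k}+1)}$ is non-constant and hence never equals a (necessarily constant) root of $h$, forcing $h(b_k)\neq0$. The substantive step, which I expect to be the crux, is precisely the passage from one point per exponent to many points on one curve: the naive attempt to find several $x$ with a common $a=x^{q^m+1}$ fails, because competing solutions differ by $(q^m+1)$-th roots of unity and only $\gcd(q^m+1,q-1)\le2$ of these lie in $\Fq$. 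Allowing the exponent $m$ itself to vary and enforcing compatibility through $a=t^{\lcm}$ is exactly what defeats this bound and yields unboundedly many points.
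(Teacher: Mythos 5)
Your argument is correct and uses the same mechanism as the paper's proof: the Frobenius identity $h(b)^{q^m}=h(b^{q^m})$ produces the point $\bigl(b,h(b)^{(q^m+1)/r}\bigr)$ on $X_{h,r,a}$ whenever $a=b^{q^m+1}$ and $r\mid q^m+1$, and one then arranges for several such $b$ (with varying $m$) to share a single $a$. The only difference is in that last arrangement: the paper takes $a=t^{q^n+1}$ and exploits the divisibility $q^m+1\mid q^n+1$ for $m\mid n$ with $n/m$ odd, getting one point per suitable divisor of $n$, while you take $a=t^L$ with $L$ the least common multiple of the $q^{m_k}+1$ --- equally valid for proving unboundedness, though much less economical in $\deg a$ per point produced.
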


\begin{proof} 
  We first note that if $d=q^n+1$, $r$ divides $d$, and $a=t^d$, then
  we have a rational point $(x,y)=(t,h(t)^{d/r})$ on $X$.  Second, if
  $m$ divides $n$ and $n/m$ is odd, then $d'=q^m+1$ divides $d$.  If
  $r$ divides $d'$, setting $e=d/d'$, we have another rational point
  $(x,y)=(t^e,h(t^{e})^{d'/r})$ on $X$.  Thus if we take $n$ to be a
  multiple of $f$ such that $n/f$ is odd and has many factors, we have
  many points.
\end{proof}

\begin{exs} Up to change of coordinates, the case $r=2$, $h(x)=x+1$ is
  the elliptic curve studied in \cite{Ulmer14a}, the case $r>1$,
  $h(x)=x+1$ is the curve of genus $r-1$ whose Jacobian is the subject
  of \cite{AIMgroup}, and the case $r=2$, $h(x)=x^s+1$ with $s$ odd is
  the curve of genus $s$ studied in \cite{ConceicaoUlmerVoloch12}.
\end{exs}

\begin{rem} 
  Fixing $h$ and $r$, here we consider a family of curves $X_a$ over a
  fixed field $\Fq(t)$.  It is sometimes more convenient to consider
  the fixed curve $y^r=h(x)h(t/x)$ over extensions
  $\Fq(u)/\Fq(t)$ where $t$ is a varying rational function of $u$.
\end{rem}

Consider the case $r=2$, $h(x)=x^s+1$ with $s$ odd.  Let $\XX$ be the
smooth projective surface with affine model
$$y^2=x(x^s+1)(x^s+t^s)$$  
and consider the fibration $\XX \to \P^1$, $(x,y,t) \mapsto t$.  Its
generic fiber is isomorphic to $X_{h,r,t}$ over $\Fq(t)$.  As remarked
in \cite{ConceicaoUlmerVoloch12}, the results of
\cite{CaporasoHarrisMazur97} show that the fibration has a fibered
power which covers a variety of general type.  However, since this
fibration is defined over a finite field, the variety of general type
will also be defined over a finite field.   Moreover it may have a
Zariski dense set of $\F_p(t)$-rational points, so the rest of the
argument of \cite{CaporasoHarrisMazur97} does not apply.  (See
\cite{AbramovichVoloch96} for a general discussion, including the
function field case).

We can be more specific: In the next section, we will see that for
many choices of $h$ and $r$, $X_{h,r,t}$ has a model over $\P^1_t$
which is already a variety of general type.

\section{Geometry of a regular proper model of $X$}
When a curve $X$ over $\Fq(t)$ has a model $\XX\to\P^1_t$ such that
$\XX$ is dominated by a product of curves, many questions about $X$
become much simpler.  For example, the Tate conjecture on divisors
holds for $\XX$, the conjecture of Birch and Swinnerton-Dyer holds for
the Jacobian of $X$, and it is often possible to compute or estimate
the rank of the N\'eron-Severi group of $\XX$ and the rank of group of
rational points on the Jacobian.  (This observation is mainly due to
Shioda \cite{Shioda86} with further elaboration in \cite{Ulmer07b}.)

Fix a polynomial $h(x)\in\Fq[t]$ and an integer $r$ with
hypotheses as in the first section.  Fix also an integer $d$ prime to
$p$, and let $X=X_{h,r,t^d}$ be the smooth projective curve over
$\Fq(t)$ associated to the equation
$$y^r=h(x)h(t^d/x).$$
Let $\XX$ be a smooth projective surface equipped with a morphism to
$\P^1$ whose generic fiber is isomorphic to $X$.  (The construction is
elementary; see \cite[Ch.~2]{Ulmer14b} for details.)  In this section,
we will show that $\XX$ is dominated by a product of curves and give
two applications:  $\XX$ is often of general type, and $X$ is
non-isotrivial.  

Let $\CC=\CC_{h,r,d}$ be the smooth projective curve over $\Fq$
associated to 
$$w^r=h(z^d).$$
Our hypotheses on $h$ and $r$ imply that $\CC$ is absolutely
irreducible.  Note that $\CC$ admits an action (over $\Fqbar$) of the
group $G:=\mu_r\times\mu_d$.

\begin{prop}
The surface $\XX$ is birational to the quotient of $\CC\times\CC$ by
the action of $G$, where $G$ acts ``anti-diagonally,'' i.e., by the
action above on the first factor and by its inverse on the second
factor.
\end{prop}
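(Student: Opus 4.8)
The plan is to write down an explicit dominant rational map $\CC\times\CC\ratto\XX$ that is invariant under the anti-diagonal $G$-action, and then to prove by a degree count that its degree equals $|G|=rd$. Since $G$ acts faithfully, this will force the induced map $(\CC\times\CC)/G\ratto\XX$ to be birational. I work over $\Fqbar$, where $\mu_r$ and $\mu_d$ have their full orders (recall $rd$ is prime to $p$).

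Write the function field of $\CC$ as $\Fqbar(z,w)$ with $w^r=h(z^d)$, so that $\mu_d$ acts by $z\mapsto\xi z$ and $\mu_r$ by $w\mapsto\zeta w$. On $\CC\times\CC$ take coordinates $(z_1,w_1,z_2,w_2)$, with $(\zeta,\xi)\in G$ acting by $(z_1,w_1,z_2,w_2)\mapsto(\xi z_1,\zeta w_1,\xi^{-1}z_2,\zeta^{-1}w_2)$. Recalling that $\Fqbar(\XX)=\Fqbar(x,y,t)$, where $t$ is the base coordinate and the generic fiber is $y^r=h(x)h(t^d/x)$, I would define the map by
$$x=z_1^d,\qquad t=z_1z_2,\qquad y=w_1w_2.$$
Each of these three functions is visibly fixed by the anti-diagonal action, so the map factors through $(\CC\times\CC)/G$; and it does land on $\XX$, since $x=z_1^d$ gives $h(x)=w_1^r$, while $t^d/x=z_2^d$ gives $h(t^d/x)=w_2^r$, whence $y^r=(w_1w_2)^r=h(x)h(t^d/x)$.

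The substance is the degree computation $[\Fqbar(\CC\times\CC):\Fqbar(x,y,t)]=rd$, which I would carry out through the tower $\Fqbar(x,t)\subset\Fqbar(z_1,z_2)\subset\Fqbar(\CC\times\CC)$ alongside $\Fqbar(x,t)\subset\Fqbar(x,y,t)$. Since $x$ and $t$ are algebraically independent and $z_2=t/z_1$, the polynomial $T^d-x$ is irreducible over $\Fqbar(x,t)$ (Eisenstein at the prime $x$), giving $[\Fqbar(z_1,z_2):\Fqbar(x,t)]=d$. Next $[\Fqbar(\CC\times\CC):\Fqbar(z_1,z_2)]=r^2$, as adjoining $w_1$ and then $w_2$ each multiplies the degree by $r$. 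Finally $[\Fqbar(x,y,t):\Fqbar(x,t)]=r$, because $y^r=h(x)h(t^d/x)$ is not an $\ell$-th power in $\Fqbar(x,t)$ for any prime $\ell\mid r$ --- this is precisely the absolute irreducibility of $X$, part of our standing hypotheses. Dividing, $[\Fqbar(\CC\times\CC):\Fqbar(x,y,t)]=r^2d/r=rd$.

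To conclude, $G$ acts faithfully (its action on $(z_1,w_1)$ alone already detects both $\xi$ and $\zeta$), so $\Fqbar(\CC\times\CC)$ is Galois over its $G$-invariants with $[\Fqbar(\CC\times\CC):\Fqbar(\CC\times\CC)^G]=|G|=rd$. Since $\Fqbar(x,y,t)\subseteq\Fqbar(\CC\times\CC)^G$ and both have index $rd$ in $\Fqbar(\CC\times\CC)$, they coincide; thus $\Fqbar((\CC\times\CC)/G)=\Fqbar(\XX)$, as claimed. I expect the main obstacle to be the step $[\Fqbar(\CC\times\CC):\Fqbar(z_1,z_2)]=r^2$, i.e.\ verifying that the two $r$-power extensions are genuinely independent. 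The clean route is to note that $h(z_i^d)$ lies in $\Fqbar(z_i)$, which is algebraically closed in the field constructed so far; hence being an $\ell$-th power there, for a prime $\ell\mid r$, would force it to be an $\ell$-th power already in $\Fqbar(z_i)$, contradicting the absolute irreducibility of $\CC$.
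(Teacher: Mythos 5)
Your proof is correct and follows essentially the same route as the paper: you define the identical rational map $\phi^*(x)=z_1^d$, $\phi^*(y)=w_1w_2$, $\phi^*(t)=z_1z_2$, observe $G$-invariance, and conclude by comparing degrees. The paper leaves the degree count as ``a consideration of degrees''; your tower computation $[\Fqbar(\CC\times\CC):\Fqbar(x,y,t)]=r^2d/r=rd=|G|$ is a correct and complete way to fill in that step.
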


\begin{proof}
The surface $\XX$ is birational to the (quasi-) affine surface given
by 
$$\YY:\quad y^r=h(x)h(t^d/x).$$
We define a rational map $\phi$ from $\CC\times\CC$ to $\YY$ by
setting
\begin{align*}
\phi^*(x)&=z_1^d\\
\phi^*(y)&=w_1w_2\\
\phi^*(t)&=z_1z_2
\end{align*}
It is evident that $\phi$ factors through $(\CC\times\CC)/G$ where $G$
acts anti-diagonally, and a consideration of degrees shows that the
induced rational map from $(\CC\times\CC)/G$ to $\YY$ is birational.
\end{proof}

We note that $(\CC\times\CC)/G$, and
therefore $\XX$, contains infinitely many rational curves.  Indeed the
images in the quotient of the graphs of $q^n$-power Frobenius maps
$\CC\to\CC$ and their transposes are rational curves.  This gives a
Zariski dense set of rational curves on $\XX$.

Note that when $d=q^n+1$, the image of the graph of the $q^n$-power
Frobenius $\CC\to\CC$ in $\XX$ is the section of $\XX\to\P^1$
corresponding to the point $(t,h(t)^{d/r})$, and the image of the
transpose of Frobenius corresponds to the point $(t^{d-1},h(t)^{d/r})$.
In some sense, this ``explains'' these points.

Our next result shows that $\XX$ has general type as soon as $\CC$ has
genus $>1$.  (See also \cite[\S7.1]{Granville07} for another proof of
this fact.)  If $h$ has degree $s$ with $(r,s)=1$ and distinct,
non-zero roots, and if $r|d$, then the genus of $\CC$ is
$(r-1)(ds-2)/2$ which is $>1$ for large $d$ as soon as $r>1$ and
$s\ge1$.

\begin{lemma}\label{lemma:general-type}
  Let $C$ be a curve of genus $g(C)>1$ over a field $k$.  Let $G$ be a
  finite abelian group of automorphisms of $C$ with the order of $G$
  prime to the characteristic of $k$.  Let $Y = C \times C$ and let
  $G$ act on $Y$ ``anti-diagonally": $g(y_1,y_2) = (gy_1,g^{-1}y_2)$.
  Then the quotient $Y/G$ is of general type.
\end{lemma}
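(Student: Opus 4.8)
The plan is to deduce the statement from the geometry of the canonical class. Since $g(C)>1$, the canonical divisor $K_C$ is ample, and hence on $Y=C\times C$ the divisor $K_Y=p_1^*K_C+p_2^*K_C$ (with $p_1,p_2$ the two projections) is ample, being an exterior sum of ample divisors. Thus $Y$ is a smooth projective surface with ample canonical class, and the whole problem is to show that this property descends to the quotient $X:=Y/G$. The strategy has three steps: (i) show that the quotient map $\pi\colon Y\to X$ is \'etale in codimension one, so that $X$ has only mild singularities and $\pi^*K_X=K_Y$; (ii) conclude that $K_X$ is ample; (iii) deduce that $X$ is of general type.

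For (i) the key observation is that the anti-diagonal action is almost free. An element $g\in G$ fixes a point $(y_1,y_2)\in Y$ if and only if $gy_1=y_1$ and $gy_2=y_2$; since a nontrivial automorphism of a curve of genus $>1$ has only finitely many fixed points, the fixed locus of each $g\neq 1$ is a product of two finite sets, hence finite. Therefore $G$ acts freely away from a finite set of points, and $\pi$ is \'etale in codimension one. Because $|G|$ is prime to the characteristic, $X$ has at worst abelian (indeed cyclic, since the stabilizer of a point of $C$ embeds into $\mathbb{G}_m$) quotient singularities; these are $\Q$-factorial and klt, so $K_X$ is $\Q$-Cartier, and the absence of a ramification divisor gives $\pi^*K_X=K_Y$.

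For (ii) and (iii), ampleness descends along the finite surjective morphism $\pi$, so $\pi^*K_X=K_Y$ ample forces $K_X$ to be ample. It then remains to pass to a resolution $\rho\colon\widetilde X\to X$ and show $\kappa(\widetilde X)=2$. Writing $K_{\widetilde X}=\rho^*K_X+\sum_i a_iE_i$ with $a_i>-1$ (klt) and with $\rho^*K_X$ big and nef, I would check that the pluricanonical system of $\widetilde X$ still grows maximally, so that $\widetilde X$ is of general type.

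The main obstacle is precisely this last comparison, because the quotient singularities need not be canonical: the stabilizer of a fixed point acts on $T_{y_1}C\oplus T_{y_2}C$ through a character and its inverse, so its image need not lie in $\SL_2$, and consequently some $G$-invariant pluricanonical forms on $Y$ acquire poles on the exceptional divisors of $\rho$. To control this I would count sections directly. By the K\"unneth formula $H^0(Y,mK_Y)=H^0(C,mK_C)^{\otimes 2}$, and decomposing $H^0(C,mK_C)=\bigoplus_\chi V_\chi$ into $G$-eigenspaces shows that the anti-diagonal invariants are $\bigoplus_\chi V_\chi\otimes V_\chi$, so that $\dim H^0(Y,mK_Y)^G=\sum_\chi(\dim V_\chi)^2\ge |G|^{-1}\bigl(h^0(C,mK_C)\bigr)^2$, which grows like $m^2$. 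The forms descending to \emph{regular} pluricanonical forms on $\widetilde X$ are those invariant forms vanishing, at each of the finitely many singular points, to the bounded orders dictated by the discrepancies $a_i>-1$; a direct estimate then shows that a space of dimension of order $m^2$ survives, whence $\kappa(\widetilde X)=2$. Equivalently, and more cleanly, one may simply invoke the standard fact that a projective variety with klt singularities and ample canonical class is of general type.
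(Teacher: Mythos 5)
Your proposal is correct in substance, and its decisive ingredient is exactly the paper's proof: the paper does nothing more than decompose $V_n=H^0(C,K_C^{\otimes n})$ into $G$-eigenspaces, choose one of dimension at least $\dim V_n/|G|$, map its tensor square into $H^0(Y,K_Y^{\otimes n})$ --- landing in the $G$-invariants precisely because the action is anti-diagonal --- and identify $H^0(Y,K_Y^{\otimes n})^G$ with $H^0(Y/G,K_{Y/G}^{\otimes n})$ to get quadratic growth of the plurigenera of the normal surface $Y/G$. Your Cauchy--Schwarz count $\sum_\chi(\dim V_\chi)^2\ge |G|^{-1}\bigl(h^0(C,mK_C)\bigr)^2$ is a marginally sharper version of the same computation. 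Where you genuinely diverge is in the surrounding geometry: the freeness in codimension one, the descent of ampleness of $K_Y$ to $K_{Y/G}$, and the discussion of discrepancies on a resolution. The paper does none of this; it simply remarks that $Y/G$ is normal with isolated singular points and treats the quadratic growth of $\dim H^0(Y/G,K_{Y/G}^{\otimes n})$ as the conclusion. You are right that the stabilizer of a fixed point need not act through $\SL_2$, so the quotient singularities are klt but possibly non-canonical, and the passage from the singular model to a smooth one is a real (if standard) point that the paper elides. Your ``clean'' route --- $K_{Y/G}$ ample $\Q$-Cartier on a klt surface, hence general type --- is a genuinely different argument that bypasses the eigenspace decomposition altogether; it buys the statement in the strong birational sense at the cost of importing the MMP-flavored fact, while the paper's route is elementary and self-contained but proves, strictly speaking, only the quadratic growth of the reflexive plurigenera of the normal quotient.

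One caution about the step you describe as ``a direct estimate then shows that a space of dimension of order $m^2$ survives'': this is not as routine as you suggest. The discrepancies satisfy $a_i>-1$, so the required vanishing orders along the exceptional curves can be as large as $(1-\epsilon)m$, and such vanishing at a singular point imposes on the order of $m^2$ linear conditions --- the same order of magnitude as the space you start with. A naive count therefore does not close the argument; you need either careful bookkeeping (comparing $K_{\widetilde X}^2=K_{Y/G}^2+(\sum_i a_iE_i)^2$ and establishing positivity) or, as you yourself propose, the standard fact that a projective klt variety with ample canonical class is of general type. With that fact granted your proof is complete; without it, the final paragraph is the one unproved step.
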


Note that $Y/G$ is normal with isolated singular points, so it makes
sense to speak of the canonical bundle and the plurigenera of $Y/G$. 

\begin{proof}
  We will show that $Y/G$ has Kodaira dimension 2, i.e., that the
  plurigenera of $Y/G$ grow quadratically.  Let
  $V_n=H^0(C,K_C^{\otimes n})$.  Since $g(C)>1, \dim V_n$ grows
  linearly with $n: \dim V_n \ge cn$ for some $c>0$.

  Decompose $V_n$ into eigenspaces for the action of $G$.  At least
  one of them has dimension $\ge \dim(V_n)/|G|$.  Call it $V_{n,\rho}$
  (where $\rho$ is the character by which $G$ acts on this subspace).

  Since $G$ acts anti-diagonally, the image of
  $V_{n,\rho}\otimes V_{n,\rho} \to H^0(Y,K_Y^{\otimes n})$ (via
  pull-back and wedge product) lands in the $G$-invariant subspace,
  which we denote $H^0(Y,K_Y^{\otimes n})^G$.  The map is injective,
  so 
$$\dim H^0(Y,K_Y^{\otimes n})^G\ge(\dim(V_n)/|G|)^2.$$
  This last expression is $\ge c'n^2$ for some $c'>0$.

  Since $|G|$ is prime to the characteristic of $k$, we have
$$H^0(Y/G,K_{Y/G}^{\otimes n})=H^0(Y,K_Y^{\otimes n})^G.$$  
Thus $\dim H^0(Y/G,K_{Y/G}^{\otimes n}) \ge c'n^2$, as required.
\end{proof}

Now we show that $X$ is not isotrivial, i.e., there does not exist a
curve $X_0$ defined over a finite field $k$ and an isomorphism 
$$X\times_{\Fq(t)}\overline{\Fq(t)}\cong X_0\times_k\overline{\Fq(t)}.$$

\begin{prop} 
  The curve $X=X_{h,r,a}$ is not isotrivial for any
  $a\in\Fq(t)\setminus\Fq$.
\end{prop}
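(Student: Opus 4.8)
The plan is to argue by contradiction using the cyclic structure of $X$. Suppose $X=X_{h,r,a}$ is isotrivial, so that there are a curve $X_0$ over $\Fqbar$ and an isomorphism $\psi\colon X\times_{\Fq(t)}\overline{\Fq(t)}\isoto X_0\times_{\Fqbar}\overline{\Fq(t)}$. Let $\iota$ be the automorphism $(x,y)\mapsto(x,\zeta_r y)$ of $X$; it generates a subgroup $\mu_r\subset\aut(X)$ whose quotient is the $x$-line $\P^1$. Assume first that $g(X)\ge 2$. The key point is that $\psi$ must carry the map $X\to\P^1$ to a \emph{constant} cyclic quotient: because $g(X_0)\ge 2$, the automorphism group scheme of $X_0$ is finite and \'etale over $\Fqbar$, hence constant, so $\aut\bigl(X_0\times\overline{\Fq(t)}\bigr)=\aut(X_0)$ is a fixed finite group. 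Therefore $\psi\iota\psi^{-1}$ is the base change of an order-$r$ automorphism $\tau$ of $X_0$ defined over $\Fqbar$, and $\psi$ descends to an isomorphism $\P^1\times\overline{\Fq(t)}\isoto (X_0/\langle\tau\rangle)\times\overline{\Fq(t)}$. Since $X_0/\langle\tau\rangle$ has genus $0$ it is a constant $\P^1$, so $\psi$ induces $\gamma\in\PGL_2(\overline{\Fq(t)})$ taking the branch locus of $X\to\P^1$ onto the branch locus of $X_0\to X_0/\langle\tau\rangle$, the latter being defined over $\Fqbar$.

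Next I would pin down the branch locus $B_a\subset\P^1$ of the Kummer cover $y^r=h(x)h(a/x)$. Writing $h=c\prod_i(x-\alpha_i)^{m_i}$ and using $a\notin\Fqbar$ (so that $h(x)$ and $h(a/x)$ have disjoint zero sets), a point $\beta$ lies in $B_a$ exactly when $r\nmid\ord_\beta\bigl(h(x)h(a/x)\bigr)$; thus $B_a$ contains those $\alpha_i$ and $a/\alpha_i$ with $r\nmid m_i$, together with $0$ and $\infty$ when $r\nmid\deg h$. The hypothesis that $h$ is not a proper power forces some $m_1$ with $r\nmid m_1$, so $\alpha_1,a/\alpha_1\in B_a$. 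A short case analysis now produces four points of $B_a$ with nonconstant cross-ratio. If $r\nmid\deg h$, then $0,\infty\in B_a$ and the four distinct points $0,\infty,\alpha_1,a/\alpha_1$ (distinct because $h(0)\ne 0$ and $a\ne\alpha_1^2$) have cross-ratio $\alpha_1^2/a$. If instead $r\mid\deg h$, then since $\sum_i m_i=\deg h\equiv 0$ while $m_1\not\equiv 0\pmod r$, there is a second root $\alpha_2$ with $r\nmid m_2$, and $\alpha_1,\alpha_2,a/\alpha_1,a/\alpha_2$ are four distinct points (again using $a\notin\Fqbar$) with cross-ratio $(\alpha_1^2-a)(\alpha_2^2-a)/(\alpha_1\alpha_2-a)^2$. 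In either case the cross-ratio is a nonconstant rational function of $t$, i.e.\ it lies in $\overline{\Fq(t)}\setminus\Fqbar$.

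This yields the contradiction. The cross-ratio is invariant under $\PGL_2$, so applying $\gamma$ equates the cross-ratio of these four points of $B_a$ with the cross-ratio of their images in the constant configuration $\gamma(B_a)$, which lies in $\Fqbar$; this contradicts the previous paragraph. Hence no such $\psi$ exists when $g(X)\ge 2$. The finitely many members with $g(X)=1$ (such as the elliptic curve $r=2$, $h=x+1$ of the Examples) are handled classically: their $j$-invariant is a standard rational function of the very cross-ratio computed above, hence again lies in $\overline{\Fq(t)}\setminus\Fqbar$, so these curves are nonisotrivial as well.

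I expect the only real difficulty to be the first step --- arguing that an abstract isomorphism to a constant curve must respect the degree-$r$ map to the $x$-line. This is precisely where $g(X_0)\ge 2$ enters, through the finiteness (and hence constancy over $\Fqbar$) of $\aut(X_0)$: it forces the transported $\mu_r$-action, and with it the entire branch configuration, to be defined over $\Fqbar$. Everything else --- locating the branch points and checking that the chosen cross-ratio is nonconstant --- is routine and uses only the standing hypotheses $h(0)\ne 0$, $(r,p)=1$, and that $h$ is not a proper power.
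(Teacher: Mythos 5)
Your proof is correct, but it takes a genuinely different route from the paper. The paper's argument stays within the theme of its Section 2: it uses the domination of a regular proper model $\XX$ by $\CC\times\CC$ (after first reducing to $a=t$), blows up two points, and exhibits the fiber of $\pi:\widetilde{\CC\times\CC}/G\to\P^1_t$ over $t=0$ as a non-smooth semistable curve, whence the moduli map $\P^1_t\to\overline{\mathcal M}_g$ is non-constant. You instead argue by contradiction: you transport the $\mu_r$-action to the putative constant model $X_0$ --- using that for $g\ge 2$ the automorphism group scheme has trivial tangent space $H^0(X_0,T_{X_0})=0$, so it is finite \'etale and its points do not grow under extension of algebraically closed fields --- and then derive a contradiction from the $\PGL_2$-invariance of cross-ratios of the four branch points you extract from the Kummer equation ($0,\infty,\alpha_1,a/\alpha_1$ when $r\nmid\deg h$, and $\alpha_1,\alpha_2,a/\alpha_1,a/\alpha_2$ when $r\mid\deg h$), whose cross-ratio is a visibly non-constant rational function of $a$. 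I checked the details: the existence of a root with $r\nmid m_1$ does follow from the non-power hypothesis, the distinctness claims use exactly $h(0)\neq0$ and $a\notin\Fqbar$ as you say, and the genus-$1$ residual case occurs only for $r=2$ with four branch points (so $p\neq2$ and the $\lambda\mapsto j$ argument applies); genus $0$ never occurs since there are always at least four branch points. Your approach has the advantages of working directly for arbitrary $a$ without the reduction to $a=t$, of being independent of the product-of-curves machinery, and of handling the genus-$1$ members explicitly; the paper's approach buys an explicit semistable model near $t=0$, which is of independent use (e.g.\ for the conductor estimates invoked in Section 4) and fits the structure of the section.
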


\begin{proof}
From the definition of isotrivial, it clearly suffices to prove that $X_{h,r,t}$ is
not isotrivial, so we assume $a=t$ for the rest of the proof.  We will
use the domination of a regular proper model $\XX$ of $X$ by
$\CC\times\CC$ where $\CC$ is the curve associated to $w^r=h(z)$.  

Let $Z\subset\CC\times\CC$ be the locus where $z_1z_2=0$.  Since
$h(0)\neq0$, this is the union of $2r$ curves each isomorphic to $\CC$
meeting transversally at $r^2$ points.

Let $\widetilde{\CC\times\CC}$ be the blow up of $\CC\times\CC$ at the
closed points where either $(z_1=0,z_2=\infty)$ or $(z_1=\infty,z_2=0)$.  Let
$\tilde Z$ be the strict transform of $Z$ in
$\widetilde{\CC\times\CC}$.

The anti-diagonal action of $G:=\mu_r$ on $\CC\times\CC$ lifts
uniquely to $\widetilde{\CC\times\CC}$, it preserves $\tilde Z$, and
it has no fixed points on $\tilde Z$.  (Again we use that
$h(0)\neq0$.)  It follows that $\tilde Z/G$ is the union of two copies
of $\CC$ meeting transversally at $r$ points.  In particular,
$\tilde Z/G$ is a semistable curve.  It also follows that
$\widetilde{\CC\times\CC}/G$ is regular in a neighborhood of
$\tilde Z/G$.

Let $\CC\times\CC\ratto\P^1_t$ be the rational map defined by
$t=z_1z_2$.  This induces a morphism $\widetilde{\CC\times\CC}\to\P^1$
which factors through $\pi:\widetilde{\CC\times\CC}/G\to\P^1_t$.
Moreover, the generic fiber of $\pi$ is $X$, and $\pi^{-1}(0)$ is
precisely $\tilde Z/G$.

We have thus constructed a regular proper model of $X$ in a
neighborhood of $t=0$ such that the special fiber is a non-smooth,
semi-stable curve.  This proves that the moduli map
$\P^1_t\to\overline{\mathcal M}_g$ associated to $X$ is non-constant, and
so $X$ is non-isotrivial.
\end{proof}

\section{Height bounds}
The finiteness of $X(\Fq(t))$ when $X$ has genus $>1$ is of course a
consequence of the Mordell conjecture for function fields. We will use
the ABC theorem to give a direct, effective proof of this fact for a
subclass of the curves studied above, namely a common generalization
of the curves in \cite{ConceicaoUlmerVoloch12} and \cite{AIMgroup}.

For the rest of the paper, we fix positive integers $r$ and $s$ prime
to one another and to $p$, we let $h(x)=x^s+1$, and we study the curve
$$X:\quad y^r=h(x)h(t^d/x)=\frac{(x^s+1)(x^s+t^{ds})}{x^s}$$
over $\Fq(t)$ where $d$ is prime to $p$.  As noted above, the
genus $g$ of $X$ is $(r-1)s$.

Note that if $(x,y)$ is an $\Fq(t)$-rational point on $X$ and $x$ is a
$p$-th power, then $(t^d/x,y)$ is another point and $t^d/x$ is not a
$p$-th power.

In this section, we prove the following height bound.

\begin{thm}\label{thm:ht-bound}
  Suppose that the genus $g$ of $X$ is $>2$.  Let $(x,y)$ be an
  $\Fq(t)$-rational point on $X$, write $x=u/v$ with $u,v\in\Fq[t]$,
  $(u,v)=1$, and let $\delta=max\{\deg u,\deg v\}$.  If $x$ is not
  a $p$-th power, then
$$\delta\le\frac{dg-1}{g-2}$$
  and if $x$ is a $p$-th power, then
$$\delta\le\frac{2d(g-1)-1}{g-2}.$$
\end{thm}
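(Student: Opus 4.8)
The plan is to use the ABC theorem for function fields (the fundamental tool announced at the start of the section) applied to a suitable decomposition of the defining equation. The key observation is that a rational point $(x,y)$ on $X$ with $x=u/v$ gives rise to an identity among polynomials, and the ABC theorem converts the requirement that $y$ be an $r$-th power of a rational function into a bound on the degree $\delta=\max\{\deg u,\deg v\}$.

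\textbf{Setting up the ABC input.} First I would clear denominators in the equation $y^r = (x^s+1)(x^s+t^{ds})/x^s$. Writing $x=u/v$ with $(u,v)=1$ and substituting, the right-hand side becomes $(u^s+v^s)(u^s+t^{ds}v^s)/(u^sv^s)$. Since $y\in\Fq(t)$ and the left side is an $r$-th power, I would analyze the factorization of the numerator and denominator into (nearly) coprime pieces. The three polynomials $u^s+v^s$, $u^s+t^{ds}v^s$, and $u^sv^s$ are the natural candidates for an ABC relation; note that their difference is $t^{ds}v^s-v^s=(t^{ds}-1)v^s$ and similar combinations, so one can extract a genuine ``A+B=C'' type identity (for instance $u^s+t^{ds}v^s - (u^s+v^s) = (t^{ds}-1)v^s$). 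The hypotheses $(r,s)=1$, $(r,p)=1$, and the coprimality $(u,v)=1$ ensure that the multiplicities forced by $y$ being an $r$-th power are controlled.

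\textbf{Applying ABC and counting.} The heart of the argument is to apply the ABC theorem to this three-term identity. ABC bounds $\max$ of the degrees of the terms by (roughly) the number of distinct zeros of their product, minus a genus-type correction. Because $y^r$ equals the product expression, the condition that each irreducible factor appears to a multiplicity divisible by $r$ (away from the support of $t$ and the ramification) drastically reduces the radical (the count of distinct roots), and this is exactly what produces the factor involving the genus $g=(r-1)s$ in the denominator $g-2$. I would track the contribution of $t^{ds}-1$, which accounts for the $d$ in the numerator: its degree $ds$ feeds into the bound, and after dividing through by the savings coming from $r$-th power multiplicities one lands on $\delta\le(dg-1)/(g-2)$.

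\textbf{The $p$-th power case and the main obstacle.} For the case where $x$ is a $p$-th power, the remark preceding the theorem is the key device: if $x$ is a $p$-th power then $t^d/x$ is not, so one applies the generic bound to the \emph{other} coordinate $t^d/x$ and transfers the estimate back to $x$, at the cost of a factor roughly doubling $d$, which yields $\delta\le(2d(g-1)-1)/(g-2)$. The main obstacle I anticipate is bookkeeping the exact radical: one must carefully account for which primes (the roots of $u^s+v^s$, of $u^s+t^{ds}v^s$, the places $t=0,\infty$, and the roots of $t^{ds}-1$) contribute to the conductor in ABC, and verify that the coprimality and $r$-th-power hypotheses force enough of these multiplicities to be $\ge r$ so that the radical is small enough. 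Getting the constants sharp enough to produce $g-2$ (rather than a weaker denominator) in the final bound is where the delicate part of the estimate lies, and the requirement $g>2$ in the hypothesis is precisely what keeps this denominator positive.
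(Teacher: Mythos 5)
Your overall strategy --- apply the function-field ABC theorem after clearing denominators, exploit the $r$-th power condition to shrink the radical, and reduce the $p$-th power case to the other case via $x\mapsto t^d/x$ --- is the paper's strategy, and the reduction for the $p$-th power case is exactly right. But there is a genuine gap at the center: the ABC triple you propose is not the one that works, and the computation that actually produces the bound is never carried out. Concretely, your identity $(u^s+t^{ds}v^s)=(u^s+v^s)+(t^{ds}-1)v^s$ has two defects. First, the summands need not be coprime: $\gcd(u^s+v^s,\,t^{ds}-1)$ can be nontrivial, so the ABC theorem as stated does not apply without further surgery. Second, $t^{ds}-1$ is separable (as $p\nmid ds$), so it contributes its \emph{full} degree $ds$ to the radical with no savings from the $r$-th power condition; it is then not at all clear that the denominator $g-2$ and the numerator $dg-1$ emerge, and you acknowledge as much when you say the delicate part is ``getting the constants sharp enough.''

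The paper's proof uses the much simpler triple $A=u^s$, $B=v^s$, $C=u^s+v^s$ (note the non-$p$-th-power hypothesis is exactly what guarantees $A,B$ are not both $p$-th powers). The entire content is the preliminary coprimality analysis of
$$\frac{(u^s+v^s)(u^s+t^{ds}v^s)}{u^sv^s}=y^r,$$
namely that $\gcd(u^s,u^s+t^{ds}v^s)\mid t^{ds}$, $\gcd(u^s+v^s,u^s+t^{ds}v^s)\mid(t^{ds}-1)$, and all other pairs are coprime; from this one deduces that $v$ is an $r$-th power, that $t^iu$ is an $r$-th power for some $0\le i\le r-1$, and that $f\cdot(u^s+v^s)$ is an $r$-th power for some $f\mid(t^{ds}-1)^{r-1}$. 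These three facts give $\deg N(A)\le(\delta+r-1)/r$, $\deg N(B)\le\delta/r$, and $\deg N(C)\le(\delta s+ds(r-1))/r$, and ABC then yields $\delta\le(dg-1)/(g-2)$ by direct arithmetic. In your write-up this coprime-factorization step is only gestured at (``the multiplicities forced by $y$ being an $r$-th power are controlled''), yet it is precisely where the $d$-dependence enters --- through the divisor $f$ of $(t^{ds}-1)^{r-1}$, contributing $ds(r-1)/r$ rather than $ds$ --- and without it the proof does not close.
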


\begin{proof}
  The case when $x$ is a $p$-th power follows immediately from the
  case when $x$ is not a $p$-th power after replacing $x$ with
  $t^d/x$, so we may assume $x$ is not a $p$-th power.

We write $a$ for $t^d$.  The hypotheses imply that
$$\frac{(u^s+v^s)(u^s+a^sv^s)}{u^sv^s}$$
is an $r$-th power in $\Fq(t)$.  Since $u$ and $v$ are relatively
prime, we have
$$\left.\gcd(u^s,u^s+a^sv^s)\right|a^s$$
and 
$$\left.\gcd(u^s+v^s,u^s+a^sv^s)\right|(a^s-1),$$
and all of the other terms in the displayed quantity are pairwise
relatively prime, i.e.,
$$\gcd(u^s,v^s)=\gcd(u^s,u^s+v^s)=
\gcd(v^s,u^s+v^s)=\gcd(v^s,u^s+a^sv^s)=1.$$
Therefore, $v$ is an $r$-th power, $t^iu$ is an $r$-th power for some
$i\in\{0,\dots,r-1\}$, and $f(u^s+v^s)$ is an $r$-th power for some
$f$ dividing $(a^s-1)^{r-1}$.

Next, we recall the ABC theorem in the following form (a special case
of \cite[Chapter 6, Lemma 10]{Mason84}).

\begin{ABC}
  If $A,B \in \F_q[t]$ are not both $p$-th powers, $(A,B)=1$, and
  $C=A+B$, then we have
$$\max \{\deg A,\deg B, \deg C \} \le \deg N(ABC) -1,$$ 
where $N(P)$ is the product of irreducible factors of $P$.
\end{ABC}

Apply this with $A=u^s$, $B=v^s$.  We have
$\deg N(A)\le(\delta+r-1)/r$, $\deg N(B)\le \delta/r$, and
$$\deg N(C)\le(\delta s+\deg f)/r\le(\delta s+ds(r-1))/r.$$
Since $A$ and $B$ are relatively prime, $N(ABC)=N(A)N(B)N(C)$ and we
find that
$$\delta s\le\frac{\delta(s+2)+ds(r-1)-1}{r}$$
and so
$$\frac{\delta((r-1)s-2)}{r}\le\frac{ds(r-1)-1}{r}.$$
Assuming that $g-2=(r-1)s-2>0$, we find that
$$\delta\le\frac{ds(r-1)-1}{(r-1)s-2}=\frac{dg-1}{g-2}$$
as desired.
\end{proof}

We note that when $d=p^n+1$, we have points on $X(\F_p(t))$ with $x$
coordinate equal to $t^{(p^n+1)/(p^m+1)}$, which are not $p$-th
powers, and for $m|n$, with $n/m$ odd, equal to
$t^{(p^n+1)p^m/(p^m+1)}$, which are $p$-th powers.  This shows that no
major improvement of the inequality of the theorem can be expected.

\section{Cardinality bounds}
We continue to study the curve 
$$X:\quad y^r=\frac{(x^s+1)(x^s+t^{ds})}{x^s}$$
over $\Fq(t)$ where $p$, $r$, and $s$ are pairwise relatively prime
and $d$ is prime to $p$.  Theorem~\ref{thm:ht-bound} yields an explicit
bound on the number of points on $X(\Fq(t))$ which is independent of
$q$:

\begin{cor}
  There is a constant $C$ depending only on $r$ and $s$, such that,
  for any power $q$ of $p$ and any $d$ prime to $p$, we have
  $\# X(\F_q(t)) \le C^d$.
\end{cor}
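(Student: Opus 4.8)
The plan is to bound the total number of rational points by counting the possible $x$-coordinates and then noting that each $x$ gives boundedly many $y$-values. The key input is Theorem~\ref{thm:ht-bound}, which tells us that any rational point $(x,y)$ with $x=u/v$ in lowest terms satisfies $\delta=\max\{\deg u,\deg v\}\le\frac{2d(g-1)-1}{g-2}$ (the weaker of the two bounds, which holds whether or not $x$ is a $p$-th power). Since $g=(r-1)s$ depends only on $r$ and $s$, this bound on $\delta$ is of the form $Bd$ for a constant $B$ depending only on $r$ and $s$.

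The heart of the argument is then a counting estimate: how many pairs $(u,v)\in\F_q[t]^2$ can occur? A naive count of polynomials of degree at most $\delta$ gives $q^{\delta+1}$ choices for each of $u$ and $v$, hence on the order of $q^{2\delta}\approx q^{2Bd}$ possibilities — but this depends on $q$, which is exactly what we must eliminate. So first I would extract the strong arithmetic constraints derived in the proof of Theorem~\ref{thm:ht-bound}: namely that $v$ is an $r$-th power, that $t^iu$ is an $r$-th power for some $i\in\{0,\dots,r-1\}$, and that $f(u^s+v^s)$ is an $r$-th power for some $f$ dividing $(a^s-1)^{r-1}$ where $a=t^d$. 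These conditions cut down the count dramatically, and the $r$-th-power conditions on $u$ and $v$ replace $\delta$ by roughly $\delta/r$ in the relevant exponents.

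The crucial idea for removing the dependence on $q$ is that these are \emph{polynomial} conditions, not free choices: once we know $v=v_0^r$ and $u=t^{-i}u_0^r$ are $r$-th powers, and that $u^s+v^s$ is (up to the factor $f$) an $r$-th power of a polynomial whose roots are constrained, the point is essentially determined by combinatorial data — the multiset of irreducible factors and their multiplicities — rather than by the coefficients over $\F_q$. Concretely, I would argue that the factorization of $u^s+v^s$ must be into factors dividing $(a^s-1)=(t^{ds}-1)$ or into the $r$-th power part, so the roots of $u$, $v$, and $u^s+v^s$ lie among a fixed set of algebraic numbers (roots of unity and roots of the discriminant-type factors) whose size grows polynomially in $d$ but is \emph{independent of $q$}. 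Counting the configurations of such factorizations then yields a bound of the shape $C^d$.

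The main obstacle I anticipate is precisely this last step: making rigorous the claim that the rational points are controlled by $q$-independent combinatorial data. The subtlety is that $\F_q$ grows, so a priori there could be many distinct points sharing the same ``shape''; one must show that the $r$-th-power and divisibility constraints, together with the equation $u^s+v^s=f^{-1}(\text{$r$-th power})$, pin down the coefficients up to finitely many choices per degree, with the finite count absorbed into the constant $C$ raised to the power $d$. I expect this will require a careful analysis — perhaps viewing solutions as $\Fqbar$-points constrained to lie on a zero-dimensional scheme whose degree is bounded by $C^d$, and then noting that $\Fq$-rational points are a subset — so that the final bound $\#X(\Fq(t))\le C^d$ follows by multiplying the count of admissible $x$ by the factor $r$ of possible $y$ over each $x$, with $r$ harmlessly folded into $C$.
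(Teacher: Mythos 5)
Your main combinatorial argument has a genuine gap, and the fix you gesture at in the last paragraph is in fact the paper's proof --- but you are missing the one input that makes it work.

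The gap: you claim that the roots of $u$, $v$, and $u^s+v^s$ ``lie among a fixed set of algebraic numbers (roots of unity and roots of the discriminant-type factors)'' of size polynomial in $d$ and independent of $q$. This is not true. The constraints from the proof of Theorem~\ref{thm:ht-bound} say only that $v=v_0^r$, that $t^iu=u_0^r$, and that $u^s+v^s$ agrees with an $r$-th power up to a factor dividing $(t^{ds}-1)^{r-1}$. The polynomials $u_0$ and $v_0$ are otherwise arbitrary, and the $r$-th power $w^r$ appearing in $u^s+v^s=w^r/f$ has unconstrained roots. So the points are not pinned down by ``combinatorial data of factorizations'': a priori the number of admissible $(u_0,v_0)$ could still grow with $q$. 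The $r$-th-power conditions reduce the number of free coefficients but do not remove the $q$-dependence.

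What the paper actually does is the move you sketch at the very end, made precise: write the $O(d)$ coefficients of the numerators and denominators of $x$ and $y$ as unknowns; the condition that $(x,y)$ lies on $X$ becomes a system of $O(d)$ polynomial equations of degree at most $r+2s$ in $O(d)$ variables. The decisive input you are missing is why this system is zero-dimensional over $\overline{\F}_p$: the paper invokes \cite[Proposition 2]{Lang60}, which says a curve with infinitely many points of bounded height over $\overline{\F}_q(t)$ is isotrivial; since $X$ is non-isotrivial (Section~2), the system has only finitely many solutions over the algebraic closure. Only then does B\'ezout apply to give at most $(r+2s)^{O(d)}=C^d$ solutions, with $\F_q(t)$-points a subset. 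Without that finiteness statement over $\overline{\F}_p$, B\'ezout gives nothing (the solution set could have positive-dimensional components carrying many $\F_q$-points), and your factorization-counting route does not supply a substitute.
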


\begin{proof}
  Theorem~\ref{thm:ht-bound} shows that the $x$ coordinate of any
  affine point has degree $O(d)$ and likewise the $y$ coordinate.
  A curve that has infinitely many points of bounded
  height (with coefficients in the algebraic closure of $\F_q$) 
  is isotrivial by \cite[Proposition 2]{Lang60}
  and the remark that immediately follows,
  so we get finiteness this way without appealing to the
  Mordell conjecture. But we get more: The conditions on the $O(d)$
  coefficients of the numerator and denominator of $x$ and $y$ for the
  point to lie on $X$ is a system of $O(d)$ equations in $O(d)$
  variables and each equation has degree at most $r+2s$. We can
  consider this system over the algebraic closure of $\F_p$ and, by
  the above argument, it has finitely many solutions, so by B\'ezout's
  theorem it has at most $(r+2s)^{O(d)}$ solutions, proving the
  corollary.
\end{proof}

The main result of \cite{PachecoPazuki13} implies a bound similar to
that of the theorem but with $C$ depending on $r$, $s$, and $p$. The
cardinality of the set of points constructed in
\cite{ConceicaoUlmerVoloch12} (and reviewed in Section~1 above) when
$d=p^n+1$ is bounded by a multiple of the number of divisors of $n$,
so there is a huge gap between the known upper bounds for the number
of points and the number of points we can produce. It would be very
interesting to narrow this gap or perhaps identify all the rational
points.

Finally, we note that it is possible to improve the exponent when $d$
is large with respect to $q$.  Indeed, the degree of conductor of the
Jacobian of $X$ is $O(d)$ (with a constant depending only on $r$ and
$s$).  It follows from the arguments in \cite[\S11]{Ulmer07b}
(generalizing \cite{Brumer92}) that the order of vanishing at $s=1$ of
the $L$-function of $X$, and therefore the rank of the Mordell-Weil
group of the Jacobian of $X$, is $O(d/\log d)$ (with a constant
depending on $r$, $s$, and $q$).  Applying \cite{BuiumVoloch96}, we
find that the number of points on $X$ is at most $C_1^{d/\log d}$
where $C_1$ depends on $r$, $s$, and $q$.  These bounds, and in
particular the exact value of the rank, can in many cases be
determined more precisely using the domination by a product of curves
in Section~3 and arguments as in \cite{Ulmer13a}.

\emph{ Acknowledgements:} Both authors thank the Simons Foundation for
financial support under grants \#359573 and \#234591.  We also thank
Igor Shparlinski for comments on an earlier version of the paper.

\bibliography{database}{}

\newcommand{\etalchar}[1]{$^{#1}$}
\def\cprime{$'$}
\begin{thebibliography}{BHP{\etalchar{+}}15}

\bibitem[AV96]{AbramovichVoloch96}
D.~Abramovich and J.~F. Voloch.
\newblock Lang's conjectures, fibered powers, and uniformity.
\newblock {\em New York J. Math.}, 2:20--34, electronic, 1996.

\bibitem[BHP{\etalchar{+}}15]{AIMgroup}
L.~Berger, C.~Hall, R.~Pannekoek, J.~Park, R.~Pries, S.~Sharif, A.~Silverberg,
  and D.~Ulmer.
\newblock Explicit arithmetic of {J}acobians of generalized {L}egendre curves
  over global function fields.
\newblock Preprint, arXiv:1505.00021, 2015.

\bibitem[Bru92]{Brumer92}
A.~Brumer.
\newblock The average rank of elliptic curves. {I}.
\newblock {\em Invent. Math.}, 109:445--472, 1992.

\bibitem[BV96]{BuiumVoloch96}
A.~Buium and J.~F. Voloch.
\newblock Lang's conjecture in characteristic {$p$}: an explicit bound.
\newblock {\em Compositio Math.}, 103:1--6, 1996.

\bibitem[CHM97]{CaporasoHarrisMazur97}
L.~Caporaso, J.~Harris, and B.~Mazur.
\newblock Uniformity of rational points.
\newblock {\em J. Amer. Math. Soc.}, 10:1--35, 1997.

\bibitem[CUV12]{ConceicaoUlmerVoloch12}
R.~Concei{\c{c}}{\~a}o, D.~Ulmer, and J.~F. Voloch.
\newblock Unboundedness of the number of rational points on curves over
  function fields.
\newblock {\em New York J. Math.}, 18:291--293, 2012.

\bibitem[Gra07]{Granville07}
A.~Granville.
\newblock Rational and integral points on quadratic twists of a given
  hyperelliptic curve.
\newblock {\em Int. Math. Res. Not. IMRN}, Art. ID 027:1--25, 2007.

\bibitem[Lan60]{Lang60}
S.~Lang.
\newblock Integral points on curves.
\newblock {\em Inst. Hautes \'Etudes Sci. Publ. Math.}, 6:27--43, 1960.

\bibitem[Mas84]{Mason84}
R.~C. Mason.
\newblock {\em Diophantine equations over function fields}, volume~96 of {\em
  London Mathematical Society Lecture Note Series}.
\newblock Cambridge University Press, Cambridge, 1984.

\bibitem[PP13]{PachecoPazuki13}
A.~Pacheco and F.~Pazuki.
\newblock Bounds for the number of rational points on curves over function
  fields.
\newblock {\em New York J. Math.}, 19:131--144, 2013.

\bibitem[Shi86]{Shioda86}
T.~Shioda.
\newblock An explicit algorithm for computing the {P}icard number of certain
  algebraic surfaces.
\newblock {\em Amer. J. Math.}, 108:415--432, 1986.

\bibitem[Ulm07]{Ulmer07b}
D.~Ulmer.
\newblock {$L$}-functions with large analytic rank and abelian varieties with
  large algebraic rank over function fields.
\newblock {\em Invent. Math.}, 167:379--408, 2007.

\bibitem[Ulm13]{Ulmer13a}
D.~Ulmer.
\newblock On {M}ordell-{W}eil groups of {J}acobians over function fields.
\newblock {\em J. Inst. Math. Jussieu}, 12:1--29, 2013.

\bibitem[Ulm14a]{Ulmer14b}
D.~Ulmer.
\newblock Curves and {J}acobians over function fields.
\newblock In G.~Boeckle et~al., editor, {\em Arithmetic Geometry over Global
  Function Fields}, Advanced Courses in Mathematics CRM Barcelona, pages
  281--337. Springer, Basel, 2014.

\bibitem[Ulm14b]{Ulmer14a}
D.~Ulmer.
\newblock Explicit points on the {L}egendre curve.
\newblock {\em J. Number Theory}, 136:165--194, 2014.

\end{thebibliography}
\bibliographystyle{alpha}

\end{document}